\newtheorem{theorem}{theorem}[section]
\newtheorem{thm}[theorem]{Theorem}
\newtheorem{lem}[theorem]{Lemma}
\newtheorem{rmk}[theorem]{Remark}
\begin{document}

\title{\textbf{Automorphisms of metacyclic groups}}
\author{\Large Haimiao Chen
\footnote{Email: \emph{chenhm@math.pku.edu.cn}}  \\
\normalsize \em{Department of Mathematics, Beijing Technology and Business University, Beijing, China}\\
\Large Yueshan Xiong
\footnote{Email: \emph{xiongyueshan@gmail.com}}\\
\normalsize \em{School of Mathematics and Statics, Huazhong University of Science and Technology, Wuhan, China}\\
\Large Zhongjian Zhu
\footnote{Email: \emph{zhuzhongjianzzj@126.com}}\\
\normalsize \em{School of Mathematics and Information Science, Wenzhou University, Wenzhou, China}}
\date{}
\maketitle

\begin{abstract}
  A metacyclic group $H$ can be presented as $\langle \alpha,\beta\mid \alpha^{n}=1, \ \beta^{m}=\alpha^{t}, \ \beta\alpha\beta^{-1}=\alpha^{r}\rangle$ for some $n,m,t,r$. Each endomorphism $\sigma$ of $H$ is determined by
  $\sigma(\alpha)=\alpha^{x_{1}}\beta^{y_{1}}, \sigma(\beta)=\alpha^{x_{2}}\beta^{y_{2}}$ for some integers $x_{1},x_{2},y_{1},y_{2}$. We give sufficient and necessary conditions on $x_{1},x_{2},y_{1},y_{2}$ for $\sigma$ to be an automorphism.

  \medskip
  \noindent {\bf Keywords:}  automorphism, metacyclic group, linear congruence equation. \\
  {\bf MSC 2010:} 20D45.
\end{abstract}

\section{Introduction}

A finite group $G$ is {\it metacyclic} if it contains a cyclic, normal subgroup $N$ such that $G/N$ is also cyclic. In some sense, metacyclic groups  can be regarded as the simplest ones other than abelian groups.

As a natural object, the automorphism group of a metacyclic group has been widely studied.
In 1970, Davitt \cite{Da70} showed that if $G$ is a metacyclic $p$-group with $p\neq 2$, then the order $G$ divides that of ${\rm Aut}(G)$.
In 2006, Bidwell and Curran \cite{BC06} found the order and the structure of ${\rm Aut}(G)$ when $G$ is a split metacyclic $p$-group with $p\neq 2$, and in 2007, Curran \cite{Cu07} obtained similar results for split metacyclic $2$-groups.
In 2008, Curran \cite{Cu08} determined ${\rm Aut}(G)$ when $G$ is a nonsplit metacyclic $p$-group with $p\neq 2$. In 2009, Golasi\'nski and Gon\c{c}alves \cite{GG09} determined ${\rm Aut}(G)$ for any split metacyclic group $G$.
The case of nonsplit metacyclic 2-groups remains unknown.

In this paper we aim at writing down all of the automorphisms for a general metacyclic group. One of our main motivations stems from the study of regular Cayley maps on metacyclic groups (see \cite{Ch17}), which requires an explicit formula for a general automorphism.

It is well-known (see Section 3.7 of \cite{Za56}) that each metacyclic group can be presented as
\begin{align}
\langle \alpha,\beta\mid \alpha^{n}=1,\ \beta^{m}=\alpha^{t},\ \beta\alpha\beta^{-1}=\alpha^{r}\rangle \label{eq:presentation}
\end{align}
for some positive integers $n,m,r,t$ satisfying
\begin{align}
r^{m}-1\equiv t(r-1)\equiv 0\pmod{n}.   \label{eq:condition}
\end{align}
Denote this group by $H=H(n,m;t,r)$.
There is an extension
$$1\to\mathbb{Z}/n\mathbb{Z}\to H\to\mathbb{Z}/m\mathbb{Z}\to 1,$$
where $\mathbb{Z}/n\mathbb{Z}\cong\langle \alpha\rangle\lhd H$ and $\mathbb{Z}/m\mathbb{Z}\cong H/\langle \alpha\rangle$.
It may happen that two groups given by different values of $n,m,t,r$ are isomorphic. A complete classification (up to isomorphism) for finite metacyclic groups was obtained by Hempel \cite{He00} in 2000.

In the presentation (\ref{eq:presentation}), we may assume $t\mid n$ which we do from now on. To see this, choose $u,v$ such that $un+vt=(n,t)$, then $(v,n/(n,t))=1$. Let $w$ be the product of all prime factors of $m$ that do not divide $v$ and let $v'=v+wn/(n,t)$, then $(v',m)=1$. Replacing $\beta$ by $\check{\beta}=\beta^{v'}$,
we get another presentation:
$H=\langle \alpha,\check{\beta}\mid \alpha^{n}=1, \check{\beta}^{m}=\alpha^{(n,t)}, \check{\beta}\alpha\check{\beta}^{-1}=\alpha^{r^{v'}}\rangle.$

Obviously each element can be written as $\alpha^{u}\beta^{v}$; note that $\alpha^{u}\beta^{v}=1$ if and only if
$m\mid v$ and $n\mid (u+tv/m)$.
Each endomorphism $\sigma$ of $H$ is determined by
$\sigma(\alpha)=\alpha^{x_{1}}\beta^{y_{1}}, \sigma(\beta)=\alpha^{x_{2}}\beta^{y_{2}}$
for some integers $x_{1},x_{2},y_{1},y_{2}$.
The main result of this paper gives sufficient and necessary conditions on $x_{1},x_{2},y_{1},y_{2}$, for $\sigma$ to be an automorphism.
They consist of two parts, ensuring $\sigma$ to be invertible and well-defined, respectively. Skillfully using elementary number theoretic techniques, we manage to reduce the second part to linear congruence equations. It turns out that the situation concerning the prime 2 is quite subtle,
and this reflects the difficulty in determining the automorphism groups of nonsplit metacyclic 2-groups.

\bigskip

\textbf{Notation and Convention}
\begin{itemize}
  \item For an integer $N>0$, denote  $\mathbb{Z}/N\mathbb{Z}$ by $\mathbb{Z}_{N}$ and regard it as a quotient ring of $\mathbb{Z}$.
        For $u\in\mathbb{Z}$, denote its image under the quotient $\mathbb{Z}\twoheadrightarrow\mathbb{Z}_{N}$ also by $u$.
  \item Given integers $u,s$ with $u>0$, set $[u]_{s}=1+s+\cdots+s^{u-1}$, so that $(s-1)[u]_s=s^u-1$;
        for a prime number $p$, let $\deg_{p}(u)$ denote the largest integer $s$ with $p^{s}\mid u$.
  \item Denote $\alpha^{u}$ by $\exp_{\alpha}(u)$ when the expression for $u$ is too long.
  \item To avoid subtleties, we assume $x_1,x_2,y_1,y_2$ to be positive, and usually write an element of $H$ as $\alpha^u\beta^v$ with $u,v>0$.
\end{itemize}

\section{Determining all automorphisms}

\subsection{Preparation}

\begin{lem} \label{lem:simplify}
If $s>1$ with $\deg_{p}(s-1)=\ell\ge 1$ and $x>0$ with $\deg_{p}(x)=u\ge 0$, then
\begin{enumerate}
  \item[\rm(I)]
  $[x]_{s}\equiv
  \begin{cases}  x, &p\ne 2 \ \text{or} \ u=0 , \\  (1+2^{\ell-1})x, &p=2 \ \text{and} \ u>0 \end{cases}  \pmod{p^{\ell+u}}$;
  \item[\rm(II)]
  $s^{x}-1\equiv
  \begin{cases}  (s-1)x, &p\ne 2 \ \text{or} \ u=0 , \\ (s-1+2^{2\ell-1})x, &p=2 \ \text{and} \ u>0  \end{cases}  \pmod{p^{2\ell+u}}$.
 \end{enumerate}
\end{lem}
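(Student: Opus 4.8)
The plan is to reduce everything to the base-$p$ binomial expansion of $s$. Write $s = 1+p^{\ell}c$ with $p\nmid c$, and $x = p^{u}x'$ with $p\nmid x'$. I would prove (I) directly and then deduce (II) from it by multiplying through by $s-1 = p^{\ell}c$, using $(s-1)[x]_{s} = s^{x}-1$; this is cleaner than expanding $s^{x}$ separately, and the passage from (I) to (II) costs exactly the extra factor $p^{\ell}$ that upgrades the modulus from $p^{\ell+u}$ to $p^{2\ell+u}$.

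For (I), expand each power of $s$ and interchange the two summations, using the hockey-stick identity $\sum_{j=0}^{x-1}\binom{j}{k} = \binom{x}{k+1}$:
\[
[x]_{s} = \sum_{j=0}^{x-1}(1+p^{\ell}c)^{j} = \sum_{k\ge 0}(p^{\ell}c)^{k}\sum_{j=0}^{x-1}\binom{j}{k} = \sum_{k\ge 0}\binom{x}{k+1}(p^{\ell}c)^{k},
\]
a finite sum since $\binom{x}{k+1}=0$ for $k+1>x$. The $k=0$ term is $\binom{x}{1}=x$. For $k\ge 2$ I would show each term is divisible by $p^{\ell+u}$: from $\binom{x}{k+1}=\tfrac{x}{k+1}\binom{x-1}{k}$ we get $\deg_{p}\binom{x}{k+1}\ge u-\deg_{p}(k+1)$, so the $k$-th term has $\deg_{p}$ at least $u-\deg_{p}(k+1)+k\ell = (\ell+u)+\bigl((k-1)\ell-\deg_{p}(k+1)\bigr)$; and since $\ell\ge 2$ and $k\ge 2$ one has $(k-1)\ell\ge 2(k-1)\ge k\ge\log_{2}(k+1)\ge\deg_{p}(k+1)$. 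Hence modulo $p^{\ell+u}$ only the $k=0$ and $k=1$ terms survive.

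It then remains to evaluate the $k=1$ term $\binom{x}{2}p^{\ell}c = \tfrac{x(x-1)}{2}p^{\ell}c$. If $p\ne 2$, then $2$ is a unit mod $p$ and $p\nmid x-1$, so $\deg_{p}$ of this term is $u+\ell$, it vanishes modulo $p^{\ell+u}$, and $[x]_{s}\equiv x$. If $p=2$, then $\tfrac{x(x-1)}{2} = 2^{u-1}x'(x-1)$ with $x'(x-1)$ odd, so the term equals $2^{\ell+u-1}\cdot(\text{odd})\equiv 2^{\ell+u-1}\equiv 2^{\ell-1}x\pmod{2^{\ell+u}}$, and $[x]_{s}\equiv(1+2^{\ell-1})x$. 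This proves (I). For (II), write $[x]_{s} = \varepsilon x + p^{\ell+u}w$ from (I), where $\varepsilon=1$ for $p\ne 2$ and $\varepsilon=1+2^{\ell-1}$ for $p=2$; multiplying by $s-1=p^{\ell}c$ gives $s^{x}-1 = p^{\ell}c\,\varepsilon x + p^{2\ell+u}cw$, and the last term dies modulo $p^{2\ell+u}$. For $p\ne 2$ this reads $s^{x}-1\equiv(s-1)x$; for $p=2$ it reads $s^{x}-1\equiv(s-1)x + 2^{2\ell-1}cx$, and since $c$ is odd and $\deg_{2}x=u\ge 1$ we have $2^{2\ell-1}cx\equiv 2^{2\ell-1}x\pmod{2^{2\ell+u}}$, giving $s^{x}-1\equiv(s-1+2^{2\ell-1})x$.

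The only genuinely delicate point is the prime $2$: one must track the single extra factor of $2$ produced by $\binom{x}{2}=\tfrac{x(x-1)}{2}$ and then absorb the odd cofactors ($c$, $x'$, $x-1$) by reducing them to $1$ modulo the next power of $2$. This is precisely where both hypotheses $\ell>1$ and $u>0$ enter, and it is the source of the subtlety the introduction alludes to. Apart from this, the argument is a routine $p$-adic valuation count on binomial coefficients.
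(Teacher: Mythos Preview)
Your argument is correct and follows essentially the same route as the paper's: write $s=1+p^{\ell}c$, binomially expand, interchange summations, control the tail via $p$-adic valuations of binomial coefficients, treat the $k=1$ term separately when $p=2$, and then deduce (II) from (I) through $(s-1)[x]_{s}=s^{x}-1$. The only real difference in execution is that the paper first proves the case $x=p^{u}$ (where $\deg_{p}\binom{p^{u}}{j}=u-\deg_{p}(j)$ holds with equality) and then bootstraps to general $x$ via the factorization $[x]_{s}=[p^{u}]_{s}\cdot\sum_{j=0}^{x'-1}(s^{p^{u}})^{j}$, whereas you handle arbitrary $x$ in one pass using the hockey-stick identity and the weaker bound $\deg_{p}\binom{x}{k+1}\ge u-\deg_{p}(k+1)$; your one-shot version is slightly cleaner, while the paper's two-step version makes the role of $p^{u}$ more visible.
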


\begin{proof}
We only prove (I), then (II) follows from the identity $(s-1)[x]_{s}=s^{x}-1$.

If $u=0$, then $s\equiv 1\pmod{p^{\ell+u}}$, so $[x]_s\equiv x\pmod{p^{\ell+u}}$.

Let us assume $u>0$. Write $s=1+p^{\ell}h$ with $p\nmid h$.
Note that
\begin{align*}
\deg_{p}\left({p^{u}\choose j}\right)
&=\deg_{p}\left(\frac{(p^{u})!}{j!(p^{u}-j)!}\right) =\sum\limits_{i=0}^{j-1}\deg_{p}(p^{u}-i)-\sum\limits_{i=1}^{j}\deg_{p}(i)  \\
&=u-\deg_{p}(j)+\sum\limits_{i=1}^{j-1}(\deg_p(p^u-i)-\deg_p(i))  \\
&=u-\deg_p(j).
\end{align*}

If $p\ne 2$, then
\begin{align*}
[p^{u}]_{s}
=\sum\limits_{i=0}^{p^{u}-1}(1+p^{\ell}h)^{i}=\sum\limits_{i=0}^{p^{u}-1}\sum\limits_{j=0}^{i}{i\choose j}(p^{\ell}h)^{j}
=\sum\limits_{j=1}^{p^{u}}{p^{u}\choose j}(p^{\ell}h)^{j-1} \equiv p^{u}\pmod{p^{\ell+u}},
\end{align*}
using that for all $j\ge 2$,
\begin{align*}
\deg_{p}\left({p^{u}\choose j}\right)=u-\deg_p(j) \geq u-(j-2)\ell=(\ell+u)-(j-1)\ell.
\end{align*}
Hence $s^{p^{u}}=(s-1)[p^{u}]_{s}+1\equiv 1\pmod{p^{\ell+u}}$. Writing $x=p^{u}x'$ with $p\nmid x'$, we have
$$[x]_{s}=[p^{u}]_{s}\cdot\sum\limits_{j=0}^{x'-1}(s^{p^{u}})^{j}\equiv x'[p^{u}]_{s}
\equiv x\pmod{p^{\ell+u}}.$$

If $p=2$, then using that for all $j\ge 3$,
\begin{align*}
\deg_{2}\left({2^{u}\choose j}\right)=u-\deg_2(j) \geq u-(j-2)\ell=(\ell+u)-(j-1)\ell,
\end{align*}
we obtain
\begin{align*}
[2^{u}]_{s}=\sum\limits_{j=1}^{2^{u}}{2^{u}\choose j}(2^{\ell}h)^{j-1}\equiv 2^u+{2^{u}\choose 2}2^{\ell}h \equiv 2^{u}(1+2^{\ell-1}) \pmod{2^{\ell+u}}.
\end{align*}
Hence $s^{2^{u}}=(s-1)[2^{u}]_{s}+1\equiv 1\pmod{2^{\ell+u}}$. Writing $x=2^{u}x'$ with $2\nmid x'$, we have
$$[x]_{s}=[2^{u}]_{s}\cdot\sum\limits_{j=0}^{x'-1}(s^{2^{u}})^{j}\equiv x'[2^{u}]_{s}
\equiv (1+2^{\ell-1})x \pmod{2^{\ell+u}}.$$

\end{proof}

\subsection{The method}

It follows from (\ref{eq:presentation}) that, for $k,u,v,u',v'>0$,
\begin{align}
\beta^{v}\alpha^{u}&=\alpha^{ur^{v}}\beta^{v}, \label{eq:identity1} \\
(\alpha^{u}\beta^{v})(\alpha^{u'}\beta^{v'})&=\alpha^{u+u'r^{v}}\beta^{v+v'},\label{eq:identity2} \\
(\alpha^{u}\beta^{v})^{k}&=\alpha^{u[k]_{r^v}}\beta^{vk}, \label{eq:identity3} \\
[\alpha^{u}\beta^{v},\alpha^{u'}\beta^{v'}]&=\exp_{\alpha}(u'(r^{v}-1)-u(r^{v'}-1)), \label{eq:identity4}
\end{align}
where the notation $[\theta,\eta]=\theta\eta\theta^{-1}\eta^{-1}$ for commutator is adopted.

In view of (\ref{eq:identity4}), the commutator subgroup $[H,H]$ is generated by $\alpha^{r-1}$.
The abelianization $H^{{\rm ab}}:=H/[H,H]$ has a presentation
\begin{align}
\langle \overline{\alpha},\overline{\beta}\mid q\overline{\alpha}=0,\ m\overline{\beta}=t\overline{\alpha}\rangle, \qquad \text{with} \qquad q=(r-1,n),
\end{align}
where additive notations are used and $\overline{\alpha}+\overline{\beta}=\overline{\beta}+\overline{\alpha}$ is implicitly assumed.

\begin{lem}  \label{lem:well-defined}
%Suppose $0\leq x_{1},x_{2}<n$ and $0\leq y_{1},y_{2}<m$.
There exists a homomorphism $\sigma:H\to H$ with
$\sigma(\alpha)=\alpha^{x_{1}}\beta^{y_{1}}$, $\sigma(\beta)=\alpha^{x_{2}}\beta^{y_{2}}$ if and only if
\begin{align}
(r-1,t)y_{1}&\equiv 0\pmod{m}, \label{eq:condition1}  \\
x_{2}[m]_{r^{y_{2}}}+ty_{2}-x_{1}[t]_{r^{y_{1}}}-\frac{ty_{1}}{m}t&\equiv 0\pmod{n}, \label{eq:condition2}\\
x_2(r^{y_{1}}-1)+x_1([r]_{r^{y_{1}}}-r^{y_{2}})+\frac{(r-1)y_{1}}{m}t&\equiv 0\pmod{n}.    \label{eq:condition3}
\end{align}
\end{lem}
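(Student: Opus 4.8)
The plan is to translate the condition ``$\sigma$ extends to a homomorphism'' into the requirement that the three defining relations of $H$ are respected by the substitution $a\mapsto a^{x_1}b^{y_1}$, $b\mapsto a^{x_2}b^{y_2}$, and then to compute each of the resulting equations in $H$ using the identities (\ref{eq:identity1})--(\ref{eq:identity3}). Concretely, a map on generators of a group given by a presentation $\langle a,b\mid w_1,w_2,w_3\rangle$ extends to a homomorphism precisely when $\sigma(w_i)=e$ for $i=1,2,3$, where $w_1=a^n$, $w_2=b^{-m}a^t$, $w_3=bab^{-1}a^{-r}$. So I would compute $\sigma(a)^n$, $\sigma(b)^{-m}\sigma(a)^t$, and $\sigma(b)\sigma(a)\sigma(b)^{-1}\sigma(a)^{-r}$, each as a normal-form word $a^ub^v$, and impose that the exponents satisfy the vanishing criterion ``$a^ub^v=e$ iff $m\mid v$ and $n\mid(u+tv/m)$'' recalled in the introduction.

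For the first relation: by (\ref{eq:identity3}), $\sigma(a)^n=(a^{x_1}b^{y_1})^n=a^{x_1[n]_{r^{y_1}}}b^{ny_1}$. The $v$-part is $ny_1$, which is automatically divisible by $m$ only if $m\mid ny_1$; but here one should be more careful—$ny_1$ need not be a multiple of $m$ in general, so the condition $\sigma(a)^n=e$ already forces $m\mid ny_1$, and then the $u$-part condition reads $n\mid x_1[n]_{r^{y_1}}+t\cdot ny_1/m$. I expect the stated condition (\ref{eq:condition1}), namely $(d,t)y_1\equiv 0\pmod m$ with $d=(r-1,n)$, to emerge from a combination of this relation with the third one after simplification; the factor $(d,t)$ is exactly the order of $\overline a$'s relevant torsion seen through the abelianization presentation $\langle\overline a,\overline b\mid d\overline a=0,\ m\overline b=t\overline a\rangle$. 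So I would first pass to $H^{\mathrm{ab}}$: the image of $\sigma$ must respect the abelianized relations, which quickly yields that $y_1\overline b$ lies in the subgroup generated by $\overline a$, forcing $m\,|\,(d,t)^{-1}\cdot(\text{something})$; reconciling this with the exponent bookkeeping gives (\ref{eq:condition1}).

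For the second relation $b^{-m}a^t$: compute $\sigma(a)^t=a^{x_1[t]_{r^{y_1}}}b^{ty_1}$ and $\sigma(b)^m=a^{x_2[m]_{r^{y_2}}}b^{my_2}$, so $\sigma(b)^{-m}\sigma(a)^t$ has $v$-part $ty_1-my_2$ and, using (\ref{eq:identity2}) to collect, a $u$-part that after reduction should be $x_1[t]_{r^{y_1}}-x_2[m]_{r^{y_2}}$ adjusted by powers of $r$. Requiring the $v$-part to be $\equiv 0\pmod m$ gives $ty_1\equiv my_2\pmod m$, i.e. $m\mid ty_1$ (consistent with (\ref{eq:condition1})), and the $u$-part condition, after dividing the $v$-part by $m$ and multiplying by $t$, produces exactly (\ref{eq:condition2}). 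For the third relation $bab^{-1}a^{-r}$, I would use (\ref{eq:identity1})/(\ref{eq:identity2}) to bring $\sigma(b)\sigma(a)\sigma(b)^{-1}$ to normal form and then multiply by $\sigma(a)^{-r}=(a^{x_1}b^{y_1})^{-r}=\exp_a(-x_1[r]_{r^{y_1}})b^{-ry_1}$ (using $(a^ub^v)^{-1}$ normal form); the $v$-part vanishes identically once $y_1$ appears symmetrically, and the $u$-part gives (\ref{eq:condition3}) after collecting the $(r^{y_1}-1)x_2$ term from the conjugation and the $[r]_{r^{y_1}}-r^{y_2}$ coefficient of $x_1$, plus the $\frac{(r-1)y_1}{m}t$ correction coming from reducing $b^{y_1}$ past $a^t$ via the relation $b^m=a^t$.

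The main obstacle will be the careful normal-form bookkeeping modulo $n$: every time a power $b^{y_i}$ is pushed past a power of $a$, one picks up a factor $r^{y_i}$, and every time $b^{y_1}$ exceeds $m$ it must be traded for $a^t$, introducing $\lfloor\cdot/m\rfloor$-type terms; keeping these consistent and showing the three raw conditions collapse to exactly (\ref{eq:condition1})--(\ref{eq:condition3})—in particular that the first relation $a^n=e$ does not contribute an independent fourth condition but is subsumed—will require invoking (\ref{eq:condition}), i.e. $r^m\equiv 1$ and $t(r-1)\equiv 0\pmod n$, and possibly Lemma \ref{lem:simplify} to simplify $[n]_{r^{y_1}}$ and similar sums. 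Conversely, once the three equations hold, one checks directly that $\sigma(w_i)=e$ for each $i$, so the von Dyck argument gives a well-defined homomorphism; this direction is routine given the computations already done.
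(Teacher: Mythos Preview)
Your approach is the same as the paper's: write out the three relator conditions using (\ref{eq:identity3}) and the normal-form criterion $a^ub^v=e \Leftrightarrow m\mid v$ and $n\mid u+tv/m$, then argue that the condition coming from $\sigma(a)^n=e$ is redundant. Two small corrections are in order. First, for the third relation the $b$-exponent of $\sigma(b)\sigma(a)\sigma(b)^{-1}\sigma(a)^{-r}$ is $-(r-1)y_1$, not zero, so it contributes the divisibility condition $m\mid(r-1)y_1$ (and this is where the $\frac{(r-1)y_1}{m}t$ term in (\ref{eq:condition3}) actually comes from). Second, (\ref{eq:condition1}) is obtained not by passing to the abelianization but simply by observing that the three $b$-exponent conditions $m\mid ny_1$, $m\mid ty_1$, $m\mid(r-1)y_1$ are jointly equivalent to $m\mid\gcd(n,t,r-1)\,y_1=(d,t)y_1$. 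The redundancy of the $a$-exponent half of the $\sigma(a)^n=e$ condition is indeed the one nontrivial point; the paper checks it prime by prime: if $p\mid r^{y_1}-1$ then Lemma~\ref{lem:simplify}(I) gives $p^{\alpha_p}\mid[n]_{r^{y_1}}$, while if $p\nmid r^{y_1}-1$ then $\deg_p([n]_{r^{y_1}})=\deg_p(r^{ny_1}-1)\ge\alpha_p$ because $m\mid ny_1$ forces $r^m-1\mid r^{ny_1}-1$ and $n\mid r^m-1$.
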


\begin{proof}
Sufficient and necessary conditions for $\sigma$ to be well-defined are:
\begin{align*}
\alpha^{x_{1}[n]_{r^{y_{1}}}}\beta^{y_{1}n}&=\sigma(\alpha)^{n}=1, \\
\alpha^{x_{2}[m]_{r^{y_{2}}}}\beta^{y_{2}m}=\sigma(\beta)^{m}&=\sigma(\alpha)^{t}=\alpha^{x_{1}[t]_{r^{y_{1}}}}\beta^{y_{1}t}, \\
\alpha^{x_{2}}\beta^{y_{2}}\alpha^{x_{1}}\beta^{y_{1}}\beta^{-y_{2}}\alpha^{-x_{2}}
=\sigma(\beta)\sigma(\alpha)\sigma(\beta)^{-1}&=\sigma(\alpha)^{r}=\alpha^{x_{1}[r]_{r^{y_{1}}}}\beta^{y_{1}r};
\end{align*}
equivalently,
\begin{align}
ny_{1}&\equiv 0\pmod{m},     & x_{1}[n]_{r^{y_{1}}}+\frac{ny_{1}}{m}t\equiv 0&\pmod{n},  \label{eq:1st} \\
ty_{1}&\equiv 0\pmod{m},     & x_{2}[m]_{r^{y_{2}}}+y_{2}t\equiv x_{1}[t]_{r^{y_{1}}}+\frac{ty_{1}}{m}t&\pmod{n}, \label{eq:2nd} \\
(r-1)y_{1}&\equiv 0\pmod{m}, & x_{2}(1-r^{y_{1}})+x_{1}r^{y_{2}}\equiv x_{1}[r]_{r^{y_{1}}}+\frac{(r-1)y_{1}}{m}t&\pmod{n}.  \label{eq:3rd}
\end{align}
Due to $t\mid n$, the first halves of (\ref{eq:1st}), (\ref{eq:2nd}), (\ref{eq:3rd}) are equivalent to the single condition (\ref{eq:condition1}).
Then the second half of (\ref{eq:1st}) can be omitted: for each prime divisor $p$ of $n$,
if $p\mid r^{y_{1}}-1$, then by Lemma \ref{lem:simplify} (I), $\deg_{p}([n]_{r^{y_{1}}})\geq\deg_{p}(n)$; if $p\nmid r^{y_{1}}-1$, then since $r^{ny_{1}}-1$ is a multiple of $r^{m}-1$, we also have $\deg_{p}([n]_{r^{y_{1}}})=\deg_{p}(r^{ny_{1}}-1)\ge\deg_p(r^m-1)\ge\deg_{p}(n).$
\end{proof}

Let $\Lambda$ denote the set of prime divisors of $nm$, and for each $p\in\Lambda$, denote
\begin{align}
a_{p}=\deg_{p}(n), \qquad
b_{p}=\deg_{p}(m), \qquad
c_{p}=\deg_{p}(t),  \qquad
d_{p}=\deg_{p}(q).  \label{eq:fac}
\end{align}
Subdivide $\Lambda$ as $\Lambda=\Lambda_{1}\sqcup \Lambda_{2}\sqcup\Lambda'$, with
\begin{align}
\Lambda_{1}=\{p\colon d_{p}>0\}, \qquad \Lambda_{2}=\{p\colon a_{p}>0,\ d_{p}=0\}, \qquad  \Lambda'=\{p\colon b_p>0,\ a_{p}=0\}.
\end{align}
Denote
\begin{align}
e=\deg_{2}(r+1).
\end{align}

It follows from $t\mid n$ and $t(r-1)\equiv 0\pmod{n}$ that
\begin{align}
\left\{\begin{array}{ll}
a_{p}-d_{p}\le c_{p}\le a_{p}, &p\in\Lambda_{1}, \\
c_{p}=a_{p}, &p\in\Lambda_{2},
\end{array}\right.   \label{ineq:a}
\end{align}
and it follows from $r^{m}-1\equiv 0\pmod{n}$ and Lemma \ref{lem:simplify} (II) that
\begin{align}
d_{p}+b_{p}\geq a_{p},  \qquad \text{for\ all\ } p\in\Lambda_{1} \ \text{with} \ (p,d_{p})\neq (2,1) \ \text{or\ }(p,d_{p},b_{p})=(2,1,0);
\label{ineq:b1}
\end{align}
finally, when $d_2=1$ and $b_2>0$, Lemma \ref{lem:simplify} (II) applied to
$r^m-1=(r^2)^{m/2}-1$ implies
\begin{align}
e+b_{2}\geq a_{2}.     \label{ineq:b2}
\end{align}
The condition (\ref{eq:condition1}) is equivalent to
\begin{align}
\min\{d_{p},c_{p}\}+\deg_{p}(y_{1})\geq b_{p} \qquad \text{for\ all } \quad  p\in\Lambda.  \label{ineq:m}
\end{align}

Suppose that $x_1,x_2,y_1,y_2$ satisfy the conditions (\ref{eq:condition1}), (\ref{eq:condition2}) and (\ref{eq:condition3}) and let $\sigma$ be the endomorphism of $H$ given in Lemma \ref{lem:well-defined}.
Since $H$ is finite, $\sigma$ is invertible if and only if it is injective, which is equivalent to that both the induced homomorphism $\overline{\sigma}:H^{\rm ab}\to H^{\rm ab}$ and the restriction $\sigma_0:=\sigma|_{[H,H]}$ are injective.

In the remainder of this subsection, let
\begin{align}
w=\frac{ty_1}{m}.
\end{align}

\begin{lem} \label{lem:injective1}
%Let $\delta^{0}_{p}=\min\{\alpha_{p},\delta_{p}\}=\deg_{p}(d)$.
The homomorphism $\overline{\sigma}$ is injective if and only if
\begin{align}
\begin{cases}
p \nmid y_{2}, &p\in\Lambda', \\
p \nmid x_{1}+w,  &p\in\Lambda_{1}\ \text{with}\ b_{p}c_p=0,  \\
p \nmid x_{1}y_{2}-x_{2}y_{1},   &p\in\Lambda_{1} \ \text{with}\  b_{p},c_p>0.
\end{cases}             \label{eq:condition00}
\end{align}
\end{lem}

\begin{proof}
For each $p\in\Lambda'\sqcup\Lambda_{1}$, let
$$H^{{\rm ab}}_{p}=\langle\overline{\alpha}_{p},\overline{\beta}_{p}\rangle, \qquad \text{with} \qquad
\overline{\alpha}_{p}=\frac{tq}{p^{c_p+d_{p}}}\overline{\alpha}, \quad \overline{\beta}_{p}=\frac{mq}{p^{b_{p}+d_p}}\overline{\beta};$$
it is the Sylow $p$-subgroup of $H^{{\rm ab}}$. Then $\overline{\sigma}$ is injective if and only if $\overline{\sigma}_{p}:=\overline{\sigma}|_{H^{{\rm ab}}_{p}}$ is injective for all $p$.
Take an integer $z_p$ with $(t/p^{c_{p}})z_p\equiv 1\pmod{p^{d_{p}}}$.
We have
\begin{align}
\overline{\sigma}_p(\overline{\alpha}_{p})%&=\overline{\sigma}\left(\frac{tq}{p^{c_p+d_{p}}}\overline{\alpha}\right)
&=\frac{tq}{p^{c_p+d_p}}(x_{1}\overline{\alpha}+y_{1}\overline{\beta})=
x_{1}\overline{\alpha}_{p}+\frac{p^{b_p}ty_{1}}{p^{c_{p}}m}\overline{\beta}_{p},  \label{eq:auto-ab1}  \\
\overline{\sigma}_p(\overline{\beta}_{p})%&=\overline{\sigma}\left(\frac{mq}{p^{b_{p}+d_p}}\overline{\beta}\right)
&=\frac{mq}{p^{b_{p}+d_p}}(x_{2}\overline{\alpha}+y_{2}\overline{\beta})=\frac{m}{p^{b_{p}}}z_px_{2}\overline{\alpha}_{p}+y_{2}\overline{\beta}_{p}.
\label{eq:auto-ab2}
\end{align}
%It is easy to see that $\overline{\sigma}$ is injective if and only if $\overline{\sigma}_p$ is injective for each $p$; actually $H^{{\rm ab}}_p$ is the Sylow $p$-subgroup of $H^{{\rm ab}}$.

Let $\check{H}_p=H^{{\rm ab}}_p/pH^{{\rm ab}}_p$, let $\check{\alpha}_p,\check{\beta}_p$ denote the images of $\overline{\alpha}_p,\overline{\beta}_p$ under the quotient homomorphism $H^{{\rm ab}}_p\to\check{H}_p$, and let $\check{\sigma}_p$ denote the endomorphism of $\check{H}_p$ induced from $\overline{\sigma}_p$. Then $\overline{\sigma}_p$ is injective if and only if $\check{\sigma}_p$ is injective. It follows from (\ref{eq:auto-ab1}), (\ref{eq:auto-ab2}) that
\begin{align}
\check{\sigma}_p(\check{\alpha}_{p})&=x_{1}\check{\alpha}_{p}+\frac{p^{b_p}ty_{1}}{p^{c_{p}}m}\check{\beta}_{p}, \label{eq:auto-ab3} \\
\check{\sigma}_p(\check{\beta}_{p})&=\frac{m}{p^{b_{p}}}z_px_{2}\check{\alpha}_{p}+y_{2}\check{\beta}_{p}. \label{eq:auto-ab4}
\end{align}
\begin{itemize}
  \item If $b_p>d_p=0$, then $\check{\alpha}_p=0$, $\check{H}_p=\langle\check{\beta}_p\rangle\cong\mathbb{Z}_p$, and by (\ref{eq:auto-ab4}), $\check{\sigma}_p$ is injective if and only if $p\nmid y_2$.
  \item If $d_p>b_p=0$, then $\check{\beta}_p=p^{c_p}\check{\alpha}_p$, $\check{H}_p=\langle\check{\alpha}_p\rangle\cong\mathbb{Z}_p$, and by (\ref{eq:auto-ab3}), $\check{\sigma}_p$ is injective if and only if $p\nmid x_1+w$.
  \item If $d_p>c_p=0$, then $\check{\alpha}_p=p^{b_p}\check{\beta}_p$, $\check{H}_p=\langle\check{\beta}_p\rangle$, and by (\ref{eq:auto-ab4}), $\check{\sigma}_p$ is injective if and only if $p\nmid mz_px_2+y_2$, which, by (\ref{eq:condition2}), is equivalent to $p\nmid x_1+w$.
  \item If $b_p,c_p,d_p>0$, then $\check{H}_p=\langle\check{\alpha}_p,\check{\beta}_p\rangle\cong\mathbb{Z}_p^2$, and by (\ref{eq:auto-ab3}), (\ref{eq:auto-ab4}), $\overline{\sigma}_p$ is invertible if and only if
      $$0\not\equiv x_1y_2-\frac{p^{b_p}ty_{1}}{p^{c_{p}}m}\cdot\frac{m}{p^{b_{p}}}z_px_{2}\equiv x_1y_2-x_2y_1\pmod{p}.$$
\end{itemize}
\end{proof}

\begin{lem}  \label{lem:injective2}
Suppose $p\nmid x_1y_2-x_2y_1$ for all $p\in\Lambda_1$ with $d_p<a_p$. Then the homomorphism $\sigma_0$ is injective if and only if
\begin{align}
r^{y_1}\equiv 1\pmod{p^{a_p}} \quad \text{and}\quad p\nmid x_1+w \qquad  \text{for\ all} \quad p\in\Lambda_2. \label{eq:condition0}
\end{align}
\end{lem}
\begin{proof}
Note that $\sigma_0(\alpha^{r-1})=\alpha^u,$ with
\begin{align}
u=x_1[r-1]_{r^{y_1}}+(r-1)w.
\end{align}

For each $p\in\Lambda_{1}$ with $d_{p}<a_{p}$, by (\ref{eq:condition3}) we have
\begin{align*}
u&\equiv (1-r^{y_1})x_1[r-1]_{r^{y_1}}+x_1(r^{y_2}-1)-x_2(r^{y_1}-1)\pmod{p^{a_p}} \\
&\equiv (r-1)(x_1y_2-x_2y_1) \pmod{p^{d_p+1}},
\end{align*}
the second line following from $r^{y_j}-1\equiv (r-1)y_j\pmod{p^{2d_p}}$, $j=1,2$. Hence
\begin{align}
\deg_p(u)=d_p.  \label{eq:u}
\end{align}

Thus $\sigma_0$ is injective if and only if $p\nmid u$ for all $p\in\Lambda_2$. For $p\in\Lambda_2$, by (\ref{ineq:a}), (\ref{ineq:m}),
$$\deg_p(w)=c_p+\deg_p(y_1)-b_p\ge c_p=a_p.$$
Hence, if $p\nmid u$ then $p\nmid x_1[r-1]_{r^{y_1}}$ and this implies that $r^{y_1}\equiv 1\pmod{p^{a_p}}$ (by the argument given). On the other hand, if $r^{y_1}\equiv 1\pmod{p}$ then $[r-1]_{r^{y_1}}\equiv r-1\not\equiv 0\pmod{p^{a_p}}$ and hence $p\mid u$ if and only if $p\mid x_1$. Therefore, $\sigma_0$ is injective if and only if $p\nmid u$ if and only if $r^{y_1}\equiv 1\pmod{p^{a_p}}$ and $p\nmid x_1$; the condition $p\nmid x_1$ is equivalent to $p\nmid x_1+w$.
\end{proof}

\begin{rmk}
\rm In order to obtain neat conditions, we prefer $p\nmid x_1+w$ to $p\nmid x_1$.
\end{rmk}

Summarizing, sufficient and necessary conditions for $\sigma$ to be an automorphism are (\ref{eq:condition1}), (\ref{eq:condition2}), (\ref{eq:condition3}), (\ref{eq:condition00}) and (\ref{eq:condition0}).
Let (\ref{eq:condition2})$_{p}$ (resp. (\ref{eq:condition3})$_{p}$) denote the condition (\ref{eq:condition2}) (resp. (\ref{eq:condition3})) with ${\rm mod}\ n$ replaced by ${\rm mod}\ p^{a_{p}}$. Then (\ref{eq:condition2}) (resp. (\ref{eq:condition3})) is equivalent to (\ref{eq:condition2})$_{p}$ (resp. (\ref{eq:condition3})$_{p}$) for all $p\in\Lambda_{1}\sqcup\Lambda_{2}$ simultaneously.

\begin{rmk}  \label{rmk:p-II-I1}
\rm If $p\in\Lambda_{2}$, then $p\neq 2$: otherwise $2\mid n$ but $2\nmid r-1$, contradicting $n\mid r^{m}-1$. Due to (\ref{ineq:a}), (\ref{eq:condition0}), the conditions {\rm(\ref{eq:condition2})}$_{p}$, {\rm(\ref{eq:condition3})}$_{p}$ are equivalent to
$r^{y_{2}-1}\equiv 1\pmod{p^{a_{p}}}$.

If $p\in\Lambda_{1}$ with $d_{p}=a_{p}$, then $r\equiv 1\pmod{p^{a_{p}}}$,
hence (\ref{eq:condition3})$_{p}$ is trivial, and (\ref{eq:condition2})$_{p}$ becomes
$t(x_{1}+w-y_{2})\equiv mx_{2}\pmod{p^{a_{p}}}.$
\end{rmk}

Suppose $p\in\Lambda_1$ with $d_p<a_p$. Note that by (\ref{ineq:b1}), $b_{p}>0$.  We shall  simplify (\ref{eq:condition2})$_p$ and (\ref{eq:condition3})$_p$, with (\ref{eq:condition1}) and (\ref{eq:condition00}) assumed.

By Lemma \ref{lem:simplify} (I), $[r-1]_{r^{y_1}}\equiv r-1\pmod{p^{2d_p}}$ when $p\ne 2$ or $p=2,\deg_2(r^{y_1}-1)>1$. Hence by (\ref{eq:u}),
\begin{align}
p\nmid x_1+w \qquad \text{if} \qquad p\ne 2 \quad \text{or} \quad p=2,\ d_2+\deg_2(y_1)>1.  \label{eq:w}
\end{align}
By (\ref{ineq:a}), (\ref{ineq:b1}), (\ref{ineq:m}),
\begin{align}
\deg_p(y_1)&\ge b_p-d_p\ge a_p-2d_p,  \qquad \text{if} \quad (p,d_p)\ne(2,1), \label{ineq:deg-y1} \\
\deg_p(w)=\deg_p(y_1)+c_p-b_p&\ge c_p-d_p\ge a_p-2d_p,  \qquad \text{if} \quad (p,d_p)\ne(2,1).   \label{ineq:deg-w}
\end{align}
We will use (\ref{eq:w}), (\ref{ineq:deg-y1}), (\ref{ineq:deg-w}) repeatedly.

\begin{lem} \label{lem:deducing}
If $2\neq p\in\Lambda_{1}$, then the conditions {\rm(\ref{eq:condition2})}$_{p}$ and {\rm(\ref{eq:condition3})}$_{p}$ hold if and only if
\begin{align}
mx_{2}&\equiv t(x_{1}+w-y_{2})\pmod{p^{a_{p}}}, \label{eq:lambda1-1} \\
y_{2}&\equiv 1+w\pmod{p^{a_{p}-d_{p}}}.   \label{eq:lambda1-2}
\end{align}
\end{lem}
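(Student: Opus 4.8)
The plan is to prove the two equivalences ``(\ref{eq:condition2})$_{p}$ $\Leftrightarrow$ (\ref{eq:lambda1-1})'' and, granting (\ref{eq:lambda1-1}), ``(\ref{eq:condition3})$_{p}$ $\Leftrightarrow$ (\ref{eq:lambda1-2})''; together these give the lemma. One works modulo $p^{\alpha}$ (writing $\alpha,\beta,\gamma,\delta$ for $\alpha_{p},\beta_{p},\gamma_{p},\delta_{p}$) and uses the standing hypotheses (\ref{eq:condition1}), (\ref{eq:condition00}), the inequalities $\beta>0$, $\alpha-\delta\le\gamma\le\alpha$, $\gamma+\delta\ge\alpha$, $\delta+\beta\ge\alpha$, $\min\{\delta,\gamma\}+\deg_{p}(y_{1})\ge\beta$ furnished by (\ref{eq:condition}), (\ref{ineq:alpha1}), (\ref{ineq:beta}), (\ref{ineq:m}), and the divisibilities $r^{m}\equiv1\pmod n$, $m\mid ty_{1}$, $m\mid(r-1)y_{1}$. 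Put $y_{1}':=ty_{1}/m\in\mathbb{Z}$, so $\frac{(r-1)y_{1}}{m}\,t=(r-1)y_{1}'$; from the inequalities above one derives, for repeated use, $2\delta+\deg_{p}(y_{1})\ge\alpha$ and $\delta+\deg_{p}(y_{1}')\ge\alpha-\delta$.

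For (\ref{eq:condition2})$_{p}$: Lemma \ref{lem:simplify}(I) applied with $s=r^{y_{1}}$, $x=t$ gives $[t]_{r^{y_{1}}}\equiv t$ modulo $p^{\delta+\deg_{p}(y_{1})+\gamma}$, which holds mod $p^{\alpha}$ as $\gamma\ge\alpha-\delta$; likewise $[m]_{r^{y_{2}}}\equiv m\pmod{p^{\alpha}}$ as $\delta+\beta\ge\alpha$. (For $p\ne2$ part (I) is still valid when $\delta=1$, by the same binomial estimate.) Substituting these, (\ref{eq:condition2})$_{p}$ reads $mx_{2}+ty_{2}-tx_{1}-y_{1}'t\equiv0$, i.e.\ exactly (\ref{eq:lambda1-1}); the substitution is reversible, so the two are equivalent.

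For (\ref{eq:condition3})$_{p}$, first strip off the factor $r-1$: writing $[r]_{r^{y_{1}}}-r^{y_{2}}=\sum_{i=1}^{r-1}(r^{iy_{1}}-1)+(r-1)-(r^{y_{2}}-1)$ and using $r^{k}-1=(r-1)[k]_{r}$, $\frac{(r-1)y_{1}}{m}t=(r-1)y_{1}'$, the left side of (\ref{eq:condition3})$_{p}$ becomes $(r-1)E$ with the integer
$$E=[y_{1}]_{r}x_{2}+\Bigl(\sum_{i=1}^{r-1}[iy_{1}]_{r}+1-[y_{2}]_{r}\Bigr)x_{1}+y_{1}',$$
so, as $\deg_{p}(r-1)=\delta<\alpha$, (\ref{eq:condition3})$_{p}$ is equivalent to $E\equiv0\pmod{p^{\alpha-\delta}}$. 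Next, linearize via Lemma \ref{lem:simplify}(I)--(II): using $2\delta+\deg_{p}(y_{1})\ge\alpha$ one gets $[y_{1}]_{r}x_{2}\equiv y_{1}x_{2}$ and $\sum_{i=1}^{r-1}[iy_{1}]_{r}\equiv\frac{r(r-1)}{2}y_{1}\equiv0$ modulo $p^{\alpha-\delta}$, so $E\equiv y_{1}x_{2}+(1-[y_{2}]_{r})x_{1}+y_{1}'\pmod{p^{\alpha-\delta}}$. If $p\nmid y_{1}$ then (\ref{ineq:m}) forces $\beta\le\delta$, hence $\alpha\le2\delta$ and $\alpha-\beta\ge\alpha-\delta$; then $[y_{2}]_{r}\equiv y_{2}$ mod $p^{\alpha-\delta}$, and multiplying (\ref{eq:lambda1-1}) by $y_{1}$ and cancelling $m$ gives $y_{1}x_{2}\equiv y_{1}'(x_{1}+y_{1}'-y_{2})$ mod $p^{\alpha-\delta}$, whence $E\equiv(x_{1}+y_{1}')(1+y_{1}'-y_{2})\pmod{p^{\alpha-\delta}}$; since $p\nmid x_{1}+y_{1}'$ (deduced from (\ref{eq:condition00}) and (\ref{eq:lambda1-1})), $E\equiv0$ holds iff (\ref{eq:lambda1-2}) does. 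If $p\mid y_{1}$ then (\ref{eq:condition00}) gives $p\nmid x_{1}$, and one splits on $\deg_{p}(y_{2}(y_{2}-1))$: when it is $<\alpha-2\delta$, comparing valuations (with $\deg_{p}(y_{1}')\ge\alpha-2\delta>\deg_{p}(y_{2}-1)$ and $\deg_{p}(y_{1}x_{2})\ge\deg_{p}(y_{1})\ge\alpha-2\delta$) shows both $E\not\equiv0\pmod{p^{\alpha-\delta}}$ and (\ref{eq:lambda1-2}) fail; when it is $\ge\alpha-2\delta$, one again replaces $[y_{2}]_{r}$ by $y_{2}$ and reduces using (\ref{eq:lambda1-1}).

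The routine parts are the reduction of (\ref{eq:condition2})$_{p}$ and the extraction of the factor $r-1$ from (\ref{eq:condition3})$_{p}$. The main obstacle is the final valuation bookkeeping: because (\ref{eq:lambda1-2}) is a congruence only modulo $p^{\alpha-\delta}$ — one effectively loses a factor $r-1$ — the error terms produced by linearizing the brackets and by substituting (\ref{eq:lambda1-1}) are only weakly small, and certifying their vanishing is delicate. In particular, obtaining $p\nmid x_{1}+y_{1}'$ (equivalently, pinning down $x_{1}$ to the needed accuracy modulo $p$) requires combining (\ref{eq:condition00}) with (\ref{eq:lambda1-1}), and in the subcase $p\mid y_{1}$ with $\deg_{p}(m)>\delta$ the step ``cancel $m$'' has to be replaced by reading (\ref{eq:lambda1-1}) as a lower bound on $\deg_{p}(x_{1}+y_{1}'-y_{2})$.
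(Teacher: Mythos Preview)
Your reduction of (\ref{eq:condition2})$_{p}$ to (\ref{eq:lambda1-1}) and your extraction of the factor $r-1$ from (\ref{eq:condition3})$_{p}$ to reach $E\equiv 0\pmod{p^{\alpha-\delta}}$ are correct and match the paper in spirit. The paper, however, proceeds more directly: it uses Lemma~\ref{lem:simplify} to compute $r^{y_{1}}=1+(r-1)y_{1}$, $[r]_{r^{y_{1}}}=r$, $[t]_{r^{y_{1}}}=t$, $[m]_{r^{y_{2}}}=m$ in $\mathbb{Z}_{p^{\alpha}}$, turning (\ref{eq:condition3})$_{p}$ into $(r-1)y_{1}x_{2}-(r^{y_{2}}-r)x_{1}+(r-1)y_{1}'\equiv 0$. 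From this equation itself it reads off $\deg_{p}(y_{2}-1)\ge\alpha-2\delta-\deg_{p}(x_{1})$, which lets it replace $r^{y_{2}}-r$ by $(r-1)(y_{2}-1)$ uniformly---no case split on $\deg_{p}(y_{2}-1)$ is needed. This yields $(y_{2}-1)x_{1}\equiv x_{2}y_{1}+y_{1}'\pmod{p^{\alpha-\delta}}$, and then a single algebraic identity
\[
(y_{2}-1-y_{1}')(x_{1}+y_{1}')=(y_{2}-1)x_{1}+(y_{2}-1-x_{1}-y_{1}')y_{1}'=\frac{y_{1}}{m}\bigl(mx_{2}+ty_{2}-tx_{1}-ty_{1}'\bigr)
\]
combines (\ref{eq:lambda1-1}) with this to give $(y_{2}-1-y_{1}')(x_{1}+y_{1}')\equiv0$, using only $\deg_{p}(y_{1})\ge\beta-\delta$ from (\ref{ineq:m}). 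No separate treatment of $p\mid y_{1}$ versus $p\nmid y_{1}$, and in particular no ``cancel $m$'' step, is required.

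Your plan is sound, but two points deserve tightening. First, the unit $p\nmid x_{1}+y_{1}'$ is most cleanly obtained from $E\equiv0\pmod{p}$ together with (\ref{eq:condition00}) (since $E\equiv -(x_{1}y_{2}-x_{2}y_{1})+x_{1}+y_{1}'\pmod{p}$), rather than from (\ref{eq:lambda1-1}); your route via (\ref{eq:lambda1-1}) does work in the $p\nmid y_{1}$ case but requires a nontrivial sub-argument. Second, your proposed workaround for $\beta>\delta$---``read (\ref{eq:lambda1-1}) as a lower bound on $\deg_{p}(x_{1}+y_{1}'-y_{2})$''---is not the right fix: what actually closes the gap is observing that $y_{1}x_{2}-y_{1}'(x_{1}+y_{1}'-y_{2})=\frac{y_{1}}{m}\bigl(mx_{2}-t(x_{1}+y_{1}'-y_{2})\bigr)$ vanishes mod $p^{\alpha+\deg_{p}(y_{1})-\beta}\ge p^{\alpha-\delta}$, which is exactly the paper's identity above. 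With that substitution your factorization $E\equiv(x_{1}+y_{1}')(1+y_{1}'-y_{2})$ goes through uniformly and the case splits become unnecessary.
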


\begin{proof}
Abbreviate $a_{p},b_{p},c_{p},d_{p},\deg_{p}(x)$ to $a,b,c,d,\deg(x)$, respectively.

Applying Lemma \ref{lem:simplify}, with (\ref{ineq:a}), (\ref{ineq:b1}), (\ref{ineq:deg-y1}) recalled, we obtain
\begin{align*}
r^{y_{1}}\equiv 1+(r-1)y_{1},  \qquad  [t]_{r^{y_{1}}}\equiv t, \qquad [m]_{r^{y_{2}}}\equiv m\pmod{p^a}, \\
[r]_{r^{y_{1}}}=(r^{y_1})^{r-1}+[r-1]_{r^{y_1}}\equiv 1+(r-1)=r\pmod{p^a}.
\end{align*}
Hence (\ref{eq:condition2})$_{p}$ can be simplified as (\ref{eq:lambda1-1}) and (\ref{eq:condition3})$_{p}$ can be rewritten as
\begin{align}
(r-1)y_{1}x_{2}+(r-1)w\equiv (r^{y_{2}}-r)x_{1}\pmod{p^{a}}. \label{eq:condition4}
\end{align}
By (\ref{ineq:deg-y1}) and (\ref{ineq:deg-w}), $\deg((r-1)y_{1}x_{2}+(r-1)w)\ge a-d$, hence
\begin{align}
\deg(y_2-1)+\deg(x_1)=\deg((r^{y_2}-r)x_1)-d\ge a-2d. \label{eq:deg-y2}
\end{align}
By Lemma \ref{lem:simplify} (II), $r^{y_2-1}-1\equiv (r-1)(y_2-1)\pmod{p^{a-\deg(x_1)}}$, and then
%Using $(r-1)^2(y_2-1)x_1\equiv 0\pmod{p^a}$, we obtain
\begin{align*}
(r^{y_2}-r)x_1=(r-1)^2(y_2-1)x_1+(r-1)(y_2-1)x_1\equiv (r-1)(y_2-1)x_1\pmod{p^a}.
\end{align*}
Thus (\ref{eq:condition4}) can be converted into $(y_{2}-1)x_{1}\equiv y_{1}x_{2}+w\pmod{p^{a-d}}$.
Since by (\ref{eq:lambda1-1}),
\begin{align}
y_1x_2&\equiv\frac{ty_1}{m}(x_1+w-y_2)=w(x_1+w-y_2)\pmod{p^{a+\deg(y_1)-b}} \nonumber \\
&\equiv w(x_1+w-y_2)\pmod{p^{a-d}},   \label{eq:x2y1}
\end{align}
we are led to $(y_2-1)x_1\equiv w(x_1+w-y_2+1)\pmod{p^{a-d}}$, i.e.,
\begin{align}
(y_2-1-w)(x_1+w)\equiv 0\pmod{p^{a-d}};
\end{align}
due to (\ref{eq:w}), this is equivalent to (\ref{eq:lambda1-2}).
\end{proof}

\bigskip

Set
\begin{align}
f(y_{1})=\begin{cases}
2^{a_{2}-d_{2}-1}, &\text{if}\ c_{2}\neq b_{2}, \  \min\{b_{2},c_{2}\}=a_{2}-d_{2}  \ \text{and\ } \deg_{2}(y_{1})=b_{2}-d_{2}, \\
0, & \text{otherwise}.
\end{cases}   \label{eq:f}
\end{align}

\begin{lem} \label{lem:deducing2}
If $2\in\Lambda_{1}$, then the conditions {\rm(\ref{eq:condition2})}$_{2}$ and {\rm(\ref{eq:condition3})}$_{2}$ hold if and only if
\begin{enumerate}
  \item [\rm(i)] if $b_2=c_{2}=d_{2}=1$ (so that $a_{2}=2$), then no additional condition is required;
  \item [\rm(ii)] if $d_{2}=1$ and $\max\{b_2,c_{2}\}>1$, then $2\mid y_{1}$, $\deg_{2}(x_{2})\ge a_{2}-b_{2}-e+1$ and
      \begin{align}
      w\equiv 2^{e-1}(y_{1}-y_{2}+1)\pmod{2^{a_2-1}};   \label{eq:lambda2-0}
      \end{align}
  \item [\rm(iii)] if $d_{2}>1$, then
      \begin{align}
      mx_{2}&\equiv t(x_{1}+w-y_{2})\pmod{2^{a_{2}}}, \label{eq:lambda2-1} \\
      y_{2}&\equiv 1+w+f(y_{1}) \pmod{2^{a_{2}-d_{2}}}.   \label{eq:lambda2-2}
      \end{align}
\end{enumerate}
\end{lem}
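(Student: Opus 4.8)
\textbf{Proof plan for Lemma \ref{lem:deducing2}.}
The strategy parallels the proof of Lemma \ref{lem:deducing}, but the prime $2$ forces us to carry the extra terms $2^{\ell-1}$, $2^{2\ell-1}$ that appear in Lemma \ref{lem:simplify}, and the three subcases reflect where these terms actually interfere. Throughout we abbreviate as indicated before \S2.4.1 and work modulo $2^{\alpha}$. The first move is to expand, via Lemma \ref{lem:simplify}, the four quantities $r^{y_{1}}-1$, $[r]_{r^{y_{1}}}$, $[t]_{r^{y_{1}}}$, $[m]_{r^{y_{2}}}$ that occur in (\ref{eq:condition2})$_{2}$ and (\ref{eq:condition3})$_{2}$, being careful about whether $\deg_{2}(r^{y_{1}}-1)=\delta+\deg_{2}(y_{1})$ exceeds $1$; recall $\delta=\delta_{2}=\deg_{2}(r-1)$, and that $\delta=1$ is precisely the case where $\epsilon=\deg_{2}(r+1)>1$ takes over the role of $2\delta$ in Lemma \ref{lem:simplify} (II). Combined with the inequalities (\ref{ineq:alpha1}), (\ref{ineq:beta}) and the hypothesis (\ref{ineq:m}) (which here says $\min\{\delta,\gamma\}+\deg_2(y_1)\ge\beta$), this reduces (\ref{eq:condition2})$_{2}$ to a statement of the shape $mx_{2}\equiv t(x_{1}+ty_{1}/m-y_{2})$ plus possibly an error term, and (\ref{eq:condition3})$_{2}$ to a congruence relating $(r-1)y_{1}x_{2}$, $(r^{y_{2}}-r)x_{1}$ and $(r-1)ty_{1}/m$ modulo $2^{\alpha}$, exactly as in (\ref{eq:condition4}) but now modulo a power of $2$.

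Next I would split according to $\delta_{2}$. When $\delta_{2}>1$ the analysis is essentially identical to the odd case: Lemma \ref{lem:simplify} still gives $r^{y_{1}}=1+(r-1)y_{1}$ and $[r]_{r^{y_1}}=r$ modulo $2^{\alpha}$, so (\ref{eq:condition2})$_2$ simplifies to (\ref{eq:lambda2-1}), and one bounds $\deg_2(y_2-1)$ from (\ref{eq:condition4}) to rewrite $(r^{y_2}-r)x_1$ as $(y_2-1)(r-1)x_1$ and pass to the analogue of (\ref{eq:condition4'}); the only new feature is that Lemma \ref{lem:simplify} (I) for $p=2$ contributes the term $2^{\delta-1}$, which is exactly what produces the correction $\mu(y_1)$ in (\ref{eq:lambda2-2}) — so here the task is to track the conditions (on $\gamma$ versus $\beta$ and on $\deg_2 y_1$) under which that $2^{\delta-1}$-term survives modulo $2^{\alpha-\delta}$ rather than being absorbed, which is precisely the definition of $\mu(y_1)$ with $\alpha_2-\delta_2-1$ in the exponent. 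When $\delta_{2}=1$ we have $2\delta=2$ but $\epsilon\ge 2$, so Lemma \ref{lem:simplify} (II) gives $r^{y_1}-1\equiv (r-1+2^{2\epsilon-1})y_1$-type expansions only once $\deg_2 y_1$ is large enough; I would further split on $\gamma_2$. If $\gamma_2>1$ (case (ii)), then $t/m$-type terms are highly divisible, (\ref{eq:condition2})$_2$ degenerates, and the real content sits in (\ref{eq:condition3})$_2$: extracting the $2$-adic valuations carefully, one finds $2\mid y_1$ is forced, then the divisibility $\deg_2(x_2)\ge\alpha_2-\beta_2-\epsilon+1$ emerges from balancing $(r-1)y_1 x_2$ against the $2^{\epsilon}$-governed term $(r^{y_2}-r)x_1$, and what remains is the clean congruence (\ref{eq:lambda2-0}) with the factor $2^{\epsilon}$ in front of $y_1-y_2+1$. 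If instead $\delta_2=\gamma_2=1$ (so $\alpha_2=2$ by (\ref{ineq:alpha1})) and $\beta_2>1$ (case (i)), then (\ref{ineq:beta}) with $\delta_2=1,\beta_2>0$ gives $\epsilon+\beta_2\ge\alpha_2=2$, which is automatic, and a direct check modulo $4$ shows both (\ref{eq:condition2})$_2$ and (\ref{eq:condition3})$_2$ collapse to the single requirement $\deg_2(y_1)\ge\beta_2$; this is short but must be done by hand since the asymptotic machinery of Lemma \ref{lem:simplify} needs $\ell>1$.

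Finally I would verify the converse in each case: assuming the stated conditions, substitute back and confirm (\ref{eq:condition2})$_2$ and (\ref{eq:condition3})$_2$ hold, using (\ref{ineq:alpha1})--(\ref{ineq:m}) to discard the error terms — the same identity-juggling as in the last display of Lemma \ref{lem:deducing}, namely multiplying (\ref{eq:lambda2-2}) by a unit like $x_1+ty_1/m$ and feeding in (\ref{eq:lambda2-1}) to recover (\ref{eq:condition3})$_2$. The main obstacle, as the statement itself signals, is the bookkeeping when $\delta_2=1$: there $\epsilon$ replaces $2\delta_2$ in Lemma \ref{lem:simplify}(II), the quantity $r^{y_1}-1$ can have $2$-adic valuation jumping non-linearly in $\deg_2 y_1$, and one must separate the genuinely forced divisibilities (on $y_1$ and on $x_2$) from the final congruence without losing or gaining a factor of $2$; getting the exponents $\alpha_2-\beta_2-\epsilon+1$ in (ii) and $\alpha_2-\delta_2-1$ in the definition of $\mu(y_1)$ exactly right is where the delicacy lies. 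The case (i) is a small finite check, and case (iii) is a faithful but notationally heavier copy of the odd-prime argument.
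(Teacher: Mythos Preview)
Your overall architecture is right --- expand via Lemma~\ref{lem:simplify}, split on $\delta_{2}$, and in case (iii) mimic Lemma~\ref{lem:deducing} while tracking the $p=2$ correction --- but two of your subcase descriptions misidentify where the key constraints actually come from, and this would derail the computation.

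\textbf{Case (ii).} You say (\ref{eq:condition2})$_{2}$ ``degenerates'' and that the bound $\deg_{2}(x_{2})\ge\alpha_{2}-\beta_{2}-\epsilon+1$ ``emerges from balancing $(r-1)y_{1}x_{2}$ against the $2^{\epsilon}$-governed term $(r^{y_{2}}-r)x_{1}$'' in (\ref{eq:condition3})$_{2}$. In the paper it is the other way around: once $2\mid y_{1}$ is known, one has $t^{2}y_{1}/m\equiv 0$ and $t(x_{1}-y_{2})\equiv 0\pmod{2^{\alpha}}$, so (\ref{eq:condition2})$_{2}$ reduces to $x_{2}[m]_{r^{y_{2}}}\equiv 0$, and \emph{that} is what gives $\deg_{2}(x_{2})+\beta+\epsilon-1\ge\alpha$. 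This bound is then fed into (\ref{eq:condition3})$_{2}$ to kill the $(r^{y_{1}}-1)x_{2}$ term entirely, after which (\ref{eq:condition3})$_{2}$ becomes (\ref{eq:lambda2-0}). Also, the forcing of $2\mid y_{1}$ is not a valuation count but a short contradiction: if $y_{1}$ were odd one would get $\beta=1$, $r^{2}\equiv 1$, and then (\ref{eq:condition3})$_{2}$ forces $x_{1}(r-1)^{2}/2\equiv 0\pmod{4}$, impossible.

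\textbf{Case (iii).} Your claim that for $\delta_{2}>1$ Lemma~\ref{lem:simplify} gives $r^{y_{1}}=1+(r-1)y_{1}$ and $[r]_{r^{y_{1}}}=r$ in $\mathbb{Z}_{2^{\alpha}}$ is false: the $p=2$ case of Lemma~\ref{lem:simplify} produces $r^{y_{1}}=1+(r-1+2^{2\delta-1})y_{1}$ (when $2\mid y_{1}$), $[r]_{r^{y_{1}}}=r+2^{2\delta-1}y_{1}$, $[t]_{r^{y_{1}}}=t(1+2^{\delta-1}y_{1})$, $[m]_{r^{y_{2}}}=m(1+2^{\delta-1}y_{2})$. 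These extra terms are not all absorbed, and tracking the surviving $2^{\delta-1}(y_{1}-y_{2}+1)$ contribution is exactly what generates $\mu(y_{1})$. More seriously, the paper's argument here bifurcates on the parity of $y_{1}$: when $2\mid y_{1}$ one has $2\nmid x_{1}y_{2}$ from (\ref{eq:condition00}) and proceeds roughly as you sketch, but when $2\nmid y_{1}$ one instead gets $\delta,\gamma\ge\beta$, hence $2\delta\ge\alpha$, and the expansions and the passage to (\ref{eq:lambda2-1})--(\ref{eq:lambda2-2}) take a different shape (including a further split on whether $\gamma=\beta$). Both subcases use the auxiliary fact $2\nmid x_{1}+ty_{1}/m$, which is deduced from (\ref{eq:condition00}) combined with (\ref{eq:condition3})$_{2}$ and is the unit that lets you divide out at the end. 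Your plan does not anticipate this split or this unit, and without them the reduction to (\ref{eq:lambda2-2}) does not go through. Finally, the residual case $\delta_{2}=\gamma_{2}=\beta_{2}=1$ (where (\ref{eq:condition2})$_{2}$, (\ref{eq:condition3})$_{2}$ hold automatically under (\ref{eq:condition00})) also needs to be checked.
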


\begin{proof}
Abbreviate $a_2,b_2,c_2,d_2, \deg_2(x)$ to $a,b,c,d,\deg(x)$, respectively.

(i) For any $x,u>0$, we have $r^x\equiv 1+2x\pmod{4}$, and
$$[u]_{r^{x}}=\sum\limits_{i=0}^{u-1}r^{ix}\equiv\sum\limits_{i=0}^{u-1}(1+2ix)\equiv u+u(u-1)x\pmod{4}.$$
In particular,
$[m]_{r^{y_2}}\equiv 2+2y_2$, $[t]_{r^{y_1}}\equiv 2+2y_1$, $[r]_{r^{y_1}}\equiv 3+2y_1\pmod{4}.$
The condition (\ref{eq:condition2})$_{2}$, (\ref{eq:condition3})$_2$ can be converted into, respectively,
\begin{align}
(x_2+1)(y_2+1)-(x_1+1)(y_1+1)&\equiv 0\pmod{2}, \label{eq:trivial1} \\
x_2y_1+x_1(1+y_1-y_2)+y_1&\equiv 0\pmod{2}. \label{eq:trivial2}
\end{align}
Due to (\ref{eq:condition00}), $x_2y_1\equiv x_1y_2+1\pmod{2}$, hence (\ref{eq:trivial2}) is equivalent to
$(x_1+1)(y_1+1)\equiv 0\pmod{2}$, which is true since by (\ref{eq:condition00}), at least one of $x_1,y_1$ is odd. Then similarly, (\ref{eq:trivial1}) also holds. %without more condition.

(ii) We first show $2\mid y_{1}$.
Assume on the contrary that $2\nmid y_{1}$. By (\ref{ineq:m}), $b=1$, so that $c>1$. By (\ref{eq:condition2})$_{2}$, $x_2[m]_{r^{y_2}}\equiv 0\pmod{4}$, which forces $2\nmid y_2$: if $2\mid y_2$, then $r^{y_2}\equiv 1\pmod{4}$, so that $4\nmid[m]_{r^{y_2}}$, we would get $2\mid x_2$, contradicting (\ref{eq:condition00}). Then $r^{y_j}\equiv -1\pmod{4}, j=1,2$, and $[r]_{r^{y_1}}\equiv 1\pmod{4}$, so (\ref{eq:condition3})$_2$ implies $2(x_1-x_2)\equiv 0\pmod{4}$. But this contradicts (\ref{eq:condition00}).

Thus $2\mid y_{1}$. By (\ref{eq:condition00}), $2\nmid x_{1}y_{2}$; by (\ref{eq:w}), $2\mid w$.
Hence
\begin{align}
t(x_{1}+w-y_{2})\equiv 0\pmod{2^{a}}. \label{eq:t-xyw}
\end{align}
By (\ref{ineq:b2}), (\ref{ineq:m}), $1+\deg(y_{1})+e\geq b+e\ge a$,
hence
\begin{align}
\deg(r^{y_{1}}-1)=\deg((r^{2})^{y_{1}/2}-1)=e+\deg(y_{1})\ge a-1.
\end{align}
When $c>1$, applying Lemma \ref{lem:simplify}, we obtain
\begin{align*}
%r^{y_{1}}&=(r^{2})^{\frac{y_{1}}{2}}\equiv 1+(r^{2}-1)\cdot\frac{y_{1}}{2}\equiv 1+(r+1)y_{1}, \\
[t]_{r^{y_{1}}}&\equiv(1+2^{e+\deg(y_{1})-1})t\pmod{2^{e+\deg(y_1)+c}} \\
& \equiv t\pmod{2^a},  \\
[r]_{r^{y_{1}}}&=(r^{y_{1}})^{r-1}+[r-1]_{r^{y_{1}}}\equiv 1+(1+2^{e+\deg(y_{1})-1})(r-1)\pmod{2^{e+\deg(y_1)+1}} \\
&\equiv r+2^ey_{1}\pmod{2^a};
\end{align*}
when $c=1$ so that $a=2$, these congruence relations obviously hold.

Due to (\ref{eq:t-xyw}), the condition (\ref{eq:condition2})$_{2}$ becomes $x_{2}[m]_{r^{y_{2}}}\equiv 0\pmod{2^{a}}$.
Since $\deg(r^{y_2}+1)=\deg(r+1)=e$ and $[m]_{r^{y_2}}=(r^{y_2}+1)[m/2]_{r^{2y_2}}$, we have $\deg([m]_{r^{y_2}})=e+b-1$.
Hence
\begin{align*}
\deg(x_{2})\ge a-b-e+1.
\end{align*}
This together with (\ref{ineq:m}) implies
$$\deg((r^{y_{1}}-1)x_{2})=\deg(y_{1})+e+\deg(x_{2})\ge b-1+e+\deg(x_2)\ge a.$$
Then (\ref{eq:condition3})$_{2}$ becomes
\begin{align}
x_1(r^{y_{2}}-r-2^ey_1)\equiv (r-1)w\pmod{2^{a}}. \label{eq:condition5}
\end{align}
Since $\deg(r^{y_{2}-1}-1)=\deg((r^2)^{(y_2-1)/2}-1)=e+\deg(y_2-1)$, we have $r^{y_2-1}-1=2^e(y_2-1)z$ for some odd $z$.
Using $2^{e+1}y_{1}\equiv 2(r-1)w\equiv 0\pmod{2^a}$, we can convert (\ref{eq:condition5}) into (\ref{eq:lambda2-0}).

(iii) Applying Lemma \ref{lem:simplify} (with (\ref{ineq:deg-y1}) recalled), we obtain
\begin{align*}
r^{y_{1}}&\equiv\begin{cases} 1+(r-1)y_{1},  &2\nmid y_{1}, \\ 1+(r-1+2^{2d-1})y_{1}, &2\mid y_{1} \end{cases}\pmod{2^a},  \\
[r]_{r^{y_{1}}}&\equiv r+2^{2d-1}y_{1}\pmod{2^a},  \\
[t]_{r^{y_{1}}}&\equiv (1+2^{d-1}y_{1})t\pmod{2^a},  \\
[m]_{r^{y_{2}}}&\equiv (1+2^{d-1}y_{2})m\pmod{2^a}.
\end{align*}
We deal with the cases $2\mid y_{1}$ and $2\nmid y_{1}$ separately.

(iii-1) If $2\mid y_{1}$, then by (\ref{eq:condition00}), $2\nmid x_{1}y_{2}$, and by (\ref{eq:w}), $2\mid w$.
The condition (\ref{eq:condition2})$_{2}$ becomes
\begin{align}
(1+2^{d-1}y_{2})mx_{2}\equiv t(x_{1}+w-y_{2})\pmod{2^{a}}, \label{eq:condition6}
\end{align}
which can be converted into (\ref{eq:lambda2-1}) via multiplying $1-2^{d-1}y_{2}$. %using (\ref{ineq:alpha1}) and $2\mid x_{1}+w-y_{2}$.
Moreover, (\ref{eq:condition6}) implies $b+\deg(x_{2})\geq\min\{c+1,a\},$ hence
\begin{align*}
2d-1+\deg(x_{2})+\deg(y_{1})&\ge 2d-1+(\min\{c+1,a\}-b)+(b-d) \\
&=d-1+\min\{c+1,a\}\ge a.
\end{align*}
As a result, $x_2(r^{y_1}-1)\equiv (r-1)x_2y_1\pmod{2^a}$.
Using this and $2^{2d-1}(x_1-1)y_1\equiv 0\pmod{2^a}$, we may convert (\ref{eq:condition3})$_{2}$ into
\begin{align}
(r-1)x_{2}y_{1}+2^{2d-1}y_{1}+(r-r^{y_{2}})x_{1}+(r-1)w\equiv 0\pmod{2^{a}}.  \label{eq:condition7}
\end{align}
By an argument similar as that deducing (\ref{eq:deg-y2}) in the proof of Lemma \ref{lem:deducing},
we obtain $\deg(y_{2}-1)\geq a-2d$, and then by Lemma \ref{lem:simplify} (II),
$$r^{y_{2}-1}-1\equiv (1+2^{d-1})(r-1)(y_{2}-1)\pmod{2^{a}}.$$
Using $(r-1)(r^{y_2-1}-1)\equiv 0\pmod{2^a}$, we can further convert (\ref{eq:condition7}) into
\begin{align}
(y_{2}-1)x_{1}\equiv y_{1}x_{2}+w+2^{d-1}(y_{1}-y_{2}+1) \pmod{2^{a-d}}.    \label{eq:condition8}
\end{align}
Similarly as (\ref{eq:x2y1}), it follows from (\ref{eq:lambda2-1}) that
$y_1x_2\equiv w(x_1+w-y_2)\pmod{2^{a-d}}$, and then (\ref{eq:condition8}) becomes
\begin{align}
(y_2-1-w)(x_1+w+2^{d-1})\equiv 2^{d-1}(y_1-w) \pmod{2^{a-d}}.  \label{eq:condition9}
\end{align}
From (\ref{ineq:deg-y1}) and (\ref{ineq:deg-w}) we see that $\deg(y_1-w)\ge a-2d$, and the equality holds if and only if one of the following cases occurs:
\begin{itemize}
  \item $\deg(w)>\deg(y_1)=a-2d$, which is equivalent to $\deg(y_1)=b-d$ and $c>b=a-d$;
  \item $\deg(y_1)>\deg(w)=a-2d$, which is equivalent to $\deg(y_1)=b-d$ and $b>c=a-d$.
\end{itemize}
Thus (\ref{eq:condition9}) becomes
$$(y_2-1-w)(x_1+w+2^{d-1})\equiv f(y_1)\equiv f(y_1)(x_1+w+2^{d-1}) \pmod{2^{a-d}},$$
which is equivalent to (\ref{eq:lambda2-2}).

(iii-2) If $2\nmid y_{1}$, then $d, c\geq b$, and $2d\geq a$.
By Lemma \ref{lem:simplify} (II),
$r^{y_2}\equiv 1+(r-1)y_2\pmod{2^{a}}$,
hence (\ref{eq:condition2})$_{2}$, (\ref{eq:condition3})$_{2}$  become, respectively,
\begin{align}
(1+2^{d-1}y_{2})mx_{2}+ty_{2}&\equiv (1+2^{d-1})tx_{1}+tw\pmod{2^{a}},  \label{eq:condition90}  \\
(y_{2}-1)x_{1}&\equiv y_{1}x_2+w+2^{d-1}x_{1}\pmod{2^{a-d}}.  \label{eq:condition80}
\end{align}

If $c=b$, then by (\ref{eq:w}), $2\mid x_{1}$, and by (\ref{eq:condition00}), $2\nmid x_{2}$. By (\ref{eq:condition90}), $2\mid y_{2}$, and then (\ref{eq:condition90}) becomes (\ref{eq:lambda2-1}). We can reduce (\ref{eq:condition80}) to $y_{2}-1\equiv w\pmod{2^{a-d}}$
similarly as in the proof of Lemma \ref{lem:deducing}.

Now assume $c>b$ so that $2\mid w$. By (\ref{eq:w}), $2\nmid x_1$. Since $c+d-1\geq b+d\geq a$, we can reduce (\ref{eq:condition90}) to (\ref{eq:lambda2-1}) via multiplying by $1-2^{d-1}y_{2}$.
If $2d>a$, then still similarly as in the proof of Lemma \ref{lem:deducing}, we can reduce (\ref{eq:condition80}) to $y_{2}-1\equiv w\pmod{2^{a-d}}$; if $2d=a$, then $b=a-d=d$, then similarly as in (iii-1), we can reduce (\ref{eq:condition80}) to
$y_{2}-1\equiv w+2^{a-d-1}\pmod{2^{a-d}}$.

Thus in any case, (\ref{eq:condition2})$_2$, (\ref{eq:condition3})$_2$ are equivalent to (\ref{eq:lambda2-1}), (\ref{eq:lambda2-2}).
\end{proof}

\subsection{Main result}

Let $m_{0}$ be the smallest positive integer $k$ such that $r^{k}\equiv 1\pmod{p^{a_{p}}}$ for all $p\in\Lambda_{2}$.
Combining Lemma \ref{lem:injective1}, Lemma \ref{lem:injective2}, Remark \ref{rmk:p-II-I1}, Lemma \ref{lem:deducing} and Lemma \ref{lem:deducing2}, we establish
\begin{thm} \label{thm:main}
Each automorphism of $H(n,m;t,r)$ is given by
\begin{align*}
\alpha^{u}\beta^{v}\mapsto \exp_{\alpha}(x_{1}[u]_{r^{y_{1}}}+r^{y_{1}u}x_{2}[v]_{r^{y_{2}}})\beta^{y_{1}u+y_{2}v}, \qquad u,v>0,
\end{align*}
for a unique quadruple $(x_{1},x_{2},y_{1},y_{2})$ with $0<x_{1},x_{2}\le n$, $0<y_{1},y_{2}\le m$ and such that
\begin{enumerate}
  \item [\rm(i)] for all $p\in\Lambda$,
        $$\left\{\begin{array}{lll}
        p \nmid y_{2},  &p\in\Lambda', \\
        p \nmid x_{1}+ty_1/m,  &p\in\Lambda_{2}\ \text{or\ } p\in\Lambda_{1}\ \text{with\ } b_{p}c_p=0,\\
        p \nmid x_{1}y_{2}-x_{2}y_{1},  &p\in\Lambda_{1}\ \text{with\ } b_{p},c_p>0;
        \end{array}\right.$$
  \item [\rm (ii)] $(r-1,t)y_{1}\equiv 0\pmod{m}$ and $y_{1}\equiv y_{2}-1\equiv 0\pmod{m_0}$;
  \item [\rm (iii)] for all $p\in\Lambda_{1}$ with $p\ne 2$  or $p=2, a_{2}=d_{2}$,
        \begin{align*}
        mx_{2}&\equiv t(x_{1}+ty_{1}/m-y_{2})\pmod{p^{a_{p}}}, \\
        y_{2}&\equiv 1+ty_{1}/m\pmod{p^{a_{p}-d_{p}}};
        \end{align*}
  %\item [\rm (iv)] if $b_2=c_{2}=d_{2}=1$ and $a_{2}=2$, then
  %      $2\nmid y_{1}$ and $2\nmid x_1-x_2$;
  \item [\rm (iv)] if $\max\{b_2,c_{2}\}>d_{2}=1$ and $a_2>1$, then
        $2\mid y_{1}$, $\deg_{2}(x_{2})\ge a_{2}-b_{2}-e+1$ and
        \begin{align*}
        ty_{1}/m\equiv 2^{e-1}(y_{1}-y_{2}+1)\pmod{2^{a_{2}-1}};
        \end{align*}
  \item [\rm(v)] if $d_{2}>1$, then
        \begin{align*}
        mx_{2}&\equiv t(x_{1}+ty_{1}/m-y_{2})\pmod{2^{a_{2}}}, \\
        y_{2}&\equiv 1+ty_{1}/m+f(y_{1}) \pmod{2^{a_{2}-d_{2}}}.
        \end{align*}
\end{enumerate}
\end{thm}

\end{document}